\newtheorem{theorem}{Theorem}
\newtheorem{lemma}[theorem]{Lemma}
\newcommand{\bC}{\mathbb{C}}
\newcommand{\bQ}{\mathbb{Q}}
\newcommand{\bN}{\mathbb{N}}
\newcommand{\bR}{\mathbb{R}}
\newcommand{\bZ}{\mathbb{Z}}
\newcommand{\et}{\quad\text{and}\quad}
\newcommand{\GL}{\mathrm{GL}}
\newcommand{\hlambda}{\hat\lambda}
\newcommand{\Qbar}{\bar{\bQ}}
\newcommand{\ua}{\mathbf{a}}
\newcommand{\ux}{\mathbf{x}}
\newcommand{\uy}{\mathbf{y}}
\numberwithin{equation}{section}
\begin{document}

\baselineskip=14.5pt

\title[A measure of transcendence]
{A measure of transcendence\\ for singular points on conics}
\author{Damien ROY}
\address{
   D\'epartement de Math\'ematiques\\
   Universit\'e d'Ottawa\\
   585 King Edward\\
   Ottawa, Ontario K1N 6N5, Canada}
\email[Damien Roy]{droy@uottawa.ca}
\subjclass[2010]{Primary 11J82; Secondary 11J13, 11J87}
\keywords{Sturmian continued fractions, extremal numbers, transcendental numbers, measure of transcendence, uniform approximation, quantitative subspace theorem, minimal points, conics.}
\thanks{Research partially supported by NSERC}


\begin{abstract}
A singular point on a plane conic defined over $\bQ$ is a 
transcendental point of the curve which admits very good 
rational approximations, uniformly in terms of the height.  
Extremal numbers and Sturmian continued fractions are 
abscissa of such points on the parabola $y=x^2$.  In this 
paper we provide a measure of transcendence for singular 
points on conics defined over $\bQ$ which, in these two 
cases, improves on the measure obtained
by Adamczewski et Bugeaud.  The main tool is a quantitative 
version of Schmidt subspace theorem due to Evertse.
\par
\medskip
\noindent
{\sc R\'esum\'e.}
Un point d'une conique d\'efinie sur $\bQ$ est dit singulier s'il
est transcendant et admet de tr\`es bonnes approximations 
rationnelles, uniform\'ement en termes de la hauteur. 
Les nombres extr\'emaux et les fractions continues sturmiennes
sont les abscisses de tels points sur la parabole $y=x^2$.  
Nous \'etablissons ici une mesure de transcendance de points
singuliers sur les coniques d\'efinies sur $\bQ$ qui, dans 
ces deux cas, am\'eliore la mesure obtenue
pr\'ec\'edemment par Adamczewski et Bugeaud.  L'outil principal
est une version quantitative du th\'eor\`eme du sous-espace
de Schmidt due \`a Evertse.
\end{abstract}

\maketitle

%
%

\section{Introduction}
\label{sec:intro}

In \cite[\S5.2]{AB}, Adamczewski and Bugeaud established a measure of transcendence for the extremal numbers from \cite{RcubicI} as well as for the Sturmian continued fractions from \cite{BL}.  The goal of this paper is to prove the following sharper measure which applies to a larger class of numbers.

\begin{theorem}
Let $(\xi,\eta)\in\bR^2$.  Suppose that $1,\xi,\eta$ are linearly independent over $\bQ$, and that $f(\xi,\eta)=0$ for some irreducible polynomial $f(x,y)\in\bQ[x,y]$ of degree $2$, not in $\bQ[x]$.  Suppose furthermore that there exists a real number $\lambda>1/2$ such that the inequalities
\begin{equation}
 \label{intro:thm:eq1}
 |x_0|\le X, \quad |x_0\xi-x_1|\le X^{-\lambda}, \quad |x_0\eta-x_2|\le X^{-\lambda}
\end{equation}
have a non-zero solution $(x_0,x_1,x_2)\in\bZ^3$ for each large enough real number $X\ge 1$.  Then $\xi$ is transcendental over $\bQ$ and there exists a computable constant $c>0$ such that, for each pair of integers $d\ge 3$ and $H\ge 2$ and each algebraic number $\alpha$ of degree $d(\alpha)\le d$ and naive height $H_0(\alpha)\le H$, we have
\begin{equation}
 \label{intro:thm:eq2}
 |\xi-\alpha|
   \ge H^{-w(d)} \quad \text{where} \quad w(d)=\exp\big(c(\log d)(\log\log d)\big).
\end{equation}
\end{theorem}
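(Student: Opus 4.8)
The plan is to estimate $|\xi - \alpha|$ by transferring information from the approximation of $\xi$ by $\alpha$ to an approximation of the point $(\xi,\eta)$ by a conjugate-derived point, then feeding a well-chosen family of linear forms into Evertse's quantitative Subspace Theorem. First I would record the basic consequences of hypothesis \eqref{intro:thm:eq1}: the existence of a sequence of \emph{minimal points} $\ux_i = (x_{i,0}, x_{i,1}, x_{i,2}) \in \bZ^3$ whose heights $X_i := \|\ux_i\|$ grow geometrically and which satisfy $|x_{i,0}\xi - x_{i,1}| \ll X_{i-1}^{-\lambda}$ and $|x_{i,0}\eta - x_{i,2}| \ll X_{i-1}^{-\lambda}$. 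Since $f\notin\bQ[x]$ has degree $2$, I can write $\eta$ as an algebraic function of $\xi$ of degree $2$, and the condition $f(\xi,\eta)=0$ lets me convert a good rational approximation $p/q$ of $\xi$ into a good \emph{simultaneous} rational approximation of $(\xi,\eta)$: given $\alpha$ with $|\xi-\alpha|$ small, the pair $(\alpha, \beta)$ with $f(\alpha,\beta)=0$ approximates $(\xi,\eta)$, and $\alpha,\beta$ together generate a number field of degree at most $2d$.

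Next I would set up the auxiliary linear forms. The key algebraic input is that, because $(\xi,\eta)$ lies on the conic $f=0$, the minimal points $\ux_i$ — which encode the vector $(1:\xi:\eta)$ up to small error — satisfy an approximate quadratic relation coming from $f$; equivalently, two consecutive (or suitably spaced) minimal points $\ux_i, \ux_{i+1}$ span, together with the symmetric square of $(1:\xi:\eta)$, a configuration controlled by $f$. This is the mechanism that forces the gap structure of the $X_i$ and it is the conic analogue of the product relations used for extremal numbers. I would then consider, for the contradiction hypothesis $|\xi-\alpha| < H^{-w(d)}$ with $w(d)$ as in \eqref{intro:thm:eq2}, the $\bR$-vector (or more precisely the vector over the field $K = \bQ(\alpha,\beta)$ of degree $\le 2d$) built from a minimal point $\ux_i$ of appropriately chosen index, and evaluate on it a system of linear forms: the ``trivial'' coordinate forms $x_0, x_1, x_2$, and the forms $x_1 - \alpha x_0$, $x_2 - \beta x_0$ (and their Galois conjugates over $\bQ$), which are small because both $\ux_i$ nearly represents $(1:\xi:\eta)$ and $(\alpha,\beta)$ is close to $(\xi,\eta)$. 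Taking the product over all places and over a single well-chosen $i$, the product of these linear forms evaluated at $\ux_i$ comes out much smaller than $H(\ux_i)^{-3-\varepsilon}$ (using $\lambda > 1/2$ to get the exponent strictly past $3$), so Evertse's theorem forces all the $\ux_i$ in a suitable range to lie in a bounded number of proper subspaces.

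From there I would argue that lying in a fixed proper rational subspace $S$ of $\bQ^3$ is incompatible with the minimal points spanning $\bQ^3$ as $i$ varies: hypothesis that $1,\xi,\eta$ are $\bQ$-linearly independent guarantees the $\ux_i$ are not eventually confined to any single hyperplane (a relation $a_0 x_{i,0} + a_1 x_{i,1} + a_2 x_{i,2}=0$ holding for infinitely many $i$ would, after dividing by $x_{i,0}$ and passing to the limit, give $a_0 + a_1\xi + a_2\eta = 0$). Hence among any $N$ consecutive minimal points at least three are in ``general position,'' and if $N$ exceeds the bound on the number of subspaces from Evertse's theorem we reach a contradiction. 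The quantitative bookkeeping — Evertse's subspace count is roughly $\exp(O(d \log d))$ or a power thereof, and pigeonholing it against the geometric growth of the $X_i$ costs a factor $\log$(subspace count) in the exponent — is exactly what produces $w(d) = \exp(c(\log d)(\log\log d))$; optimizing the free parameters (the index range, the value of $\varepsilon$, and the weighting in the product of linear forms as a function of $d$) gives the stated shape. The main obstacle I anticipate is precisely this last optimization: arranging the weights on the $2d+3$ linear forms so that the product estimate beats $H(\ux_i)^{-3}$ uniformly in $d$ while keeping the degree of the number field, and hence Evertse's subspace count, under control — it is the interplay between $\lambda>1/2$, the degree $2d$ of $\bQ(\alpha,\beta)$, and Evertse's explicit dependence on the field and on $\varepsilon$ that has to be balanced delicately to land on $(\log d)(\log\log d)$ rather than a cruder function of $d$.
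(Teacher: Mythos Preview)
Your overall plan---produce $(\alpha,\beta)$ on the conic close to $(\xi,\eta)$, feed the minimal points into Evertse's quantitative subspace theorem, and escape the exceptional subspaces---matches the paper, but the way you set up the linear forms would not deliver the exponent $(\log d)(\log\log d)$. You propose to use $x_0,x_1,x_2$ together with $x_1-\alpha x_0$, $x_2-\beta x_0$ \emph{and their Galois conjugates}, some $2d+3$ forms in all. In Evertse's bound the subspace count has the shape $2^{60n^2}\delta^{-7n}\log(4D)\log\log(4D)$; letting $n$ grow linearly in $d$ makes the $2^{60n^2}$ factor dominate, giving a count of order $\exp(cd^2)$, not the $\exp(O(d\log d))$ you quote, and your final accounting becomes internally inconsistent (the logarithm of either quantity is polynomial in $d$, not $(\log d)(\log\log d)$). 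The paper keeps $n=3$ fixed and uses only the three forms $\ell_1=x_0$, $\ell_2=x_0\alpha-x_1$, $\ell_3=x_0\beta-x_2$, whose coefficients are algebraic of degree at most $2d$ and height at most $H^{O(d)}$. The whole point of Evertse's result here is that the dependence on the coefficient degree $D$ is only $\log(4D)\log\log(4D)$; with $D=2d$ this gives $t=O\big((\log d)(\log\log d)\big)$ exceptional hyperplanes, and it is this $t$, not its logarithm, that enters the exponent.

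A second quantitative ingredient that you gesture at but do not make precise is the two-sided growth control on the $X_i$ coming from the conic. One needs not only that successive minimal points are not trapped in a single hyperplane, but an explicit bound on how much the heights grow while one runs through $t+1$ distinct two-dimensional subspaces $W_i=\langle\ux_i,\ux_{i+1}\rangle$. The paper gets this from the fact that $\varphi(\ux_i)$ is a nonzero rational of bounded denominator for large $i$ (where $\varphi$ is the quadratic form homogenizing $f$), which yields $X_{i+1}^\lambda\ll X_i$; combined with $H(W_i)\ll X_{i+1}^{1-\lambda}$ and the height inequality $X_j\le H(W_i)H(W_j)$ for consecutive ``independent'' indices, this gives $X_{j+1}<X_{i+1}^{\theta}$ for a fixed $\theta>1$. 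Iterating $t$ times produces $X_{j+1}\le X_{i_\ell+1}^{\theta^t}$, hence $w(d)\asymp d\,\theta^{t}=\exp\big(O((\log d)(\log\log d))\big)$. Finally, the escape from $T_1\cup\cdots\cup T_t$ is done not with a single minimal point but by first choosing a $W_j\notin\{T_1,\dots,T_t\}$ (possible since the $W_{i_k}$ are pairwise distinct) and then a combination $a\ux_j+\ux_{j+1}$ with $0\le a\le t$ inside it; your ``three minimal points in general position'' is not quite the right counting.
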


By the naive height $H_0(\alpha)$ of an algebraic number $\alpha$, we mean the largest absolute value of the coefficients of its irreducible polynomial $P_\alpha$ in $\bZ[x]$, while its degree $d(\alpha)$ is the degree of $P_\alpha$.

Fix $(\xi,\eta)\in\bR^2$ and, for each $X\ge 1$, define $\Delta(X)$ to be the minimum of the quantities
\[
 \delta(\ux):=\max\{|x_0\xi-x_1|,|x_0\eta-x_2|\}
\]
as $\ux=(x_0,x_1,x_2)$ runs through the points of $\bZ^3$ with $1\le x_0\le X$.  Then the condition that \eqref{intro:thm:eq1} has a non-zero integer solution for a given $X\ge 1$ is equivalent to asking that $\Delta(X)\le X^{-\lambda}$.  By a theorem of Dirichlet, we have $\Delta(X)\le X^{-1/2}$ for each $X\ge 1$.  We even have $\Delta(X)\le cX^{-1}$ with a constant $c>0$ that is independent of $X$ if $1,\xi,\eta$ are linearly dependent over $\bQ$.  However, if $\xi$ and $\eta$ are algebraic over $\bQ$ and if $1,\xi,\eta$ are linearly independent over $\bQ$, then Schmidt subspace theorem \cite[Ch.~VI,Theorem 1B]{Sc} implies that, for a given $\lambda>1/2$, the inequality
\[
 |x_0|^{2\lambda}\,|x_0\xi-x_1|\,|x_0\eta-x_2| \le 1
\]
has only finitely many solutions $(x_0,x_1,x_2)\in\bZ^3$ with $x_0\neq 0$.  This in turn implies that $\Delta(X)\ge c_\lambda X^{-\lambda}$ for each $X\ge 1$ with a constant $c_\lambda>0$.  Thus any point $(\xi,\eta)$ satisfying the hypotheses of the theorem has at least one transcendental coordinate.  However, if $\xi$ is algebraic over $\bQ$, then $f(\xi,y)$ is a non-zero polynomial of degree at most two in $y$ (because $f(x,y)$ is irreducible and depends on $y$), and therefore $\eta$ is also algebraic, a contradiction.  So, $\xi$ must be transcendental.  This proves the first part in the conclusion of the theorem.  To establish the measure of transcendence \eqref{intro:thm:eq2}, we follow Adamczewski and Bugeaud in \cite{AB} by using a quantitative version of Schmidt subspace theorem, namely that of Evertse from \cite{Ev}.  We recall the latter result in Section \ref{sec:qst} and postpone the proof of the theorem to Sections \ref{sec:minimal} and \ref{sec:proof}.

Let $\gamma=(1+\sqrt{5})/2\simeq 1.618$ denote the golden ratio.  In \cite{AB}, Adamczewski and Bugeaud consider the case where $(\xi,\eta)=(\xi,\xi^2)$ is a point on the parabola $y=x^2$.  For such a point, the condition that $1,\xi,\eta$ are linearly independent over $\bQ$ amounts to asking that $\xi$ is not rational nor quadratic over $\bQ$.  In that case, Davenport and Schmidt \cite{DSb} showed the existence of a constant $c>0$ such that $\Delta(X)\ge cX^{-1/\gamma}$ for arbitrarily large values of $X$. In \cite{RcubicI}, we proved that, conversely, there exist transcendental real numbers $\xi$, called \emph{extremal numbers}, for which the pair $(\xi,\eta)=(\xi,\xi^2)$ satisfies $\Delta(X)\le c'X^{-1/\gamma}$ for each $X\ge 1$, with another constant $c'>0$.  Such pairs thus satisfy the hypotheses of the theorem for any choice of $\lambda$ in $(1/2,1/\gamma)$.  Examples of extremal numbers include all real numbers whose continued fraction expansion is the Fibonacci word on two distinct positive integers \cite{Rnote}.  In \cite{BL}, Bugeaud and Laurent consider more generally the real numbers $\xi$ whose continued fraction expansion is a Sturmian word on two distinct positive integers.  When the slope of the Sturmian word has itself bounded partial quotients (like the slope $1/\gamma$ of the Fibonacci word), they determine an explicit and best possible value $\hlambda>1/2$ such that the pair $(\xi,\xi^2)$ satisfies the hypotheses of the theorem for each $\lambda\in(1/2,\hlambda)$.  For such Sturmian continued fractions $\xi$ and for extremal numbers $\xi$, Adamczewski and Bugeaud prove a measure of transcendence of the form
\[
  |\xi-\alpha|
   \ge H^{-w(d)} \quad \text{where} \quad w(d)=\exp\big(c(\log d)^2\cdot(\log\log d)^2\big).
\]
(see \cite[\S 5]{AB}).  Our main improvement is thus to remove the square on the term $\log(d)$.  However this is still not enough to conclude that $\xi$ is an S-number in the sense of Mahler, as one could expect, since this requires a measure of the form $|\xi-\alpha| \ge H^{-cd}$.

The numbers of Sturmian type introduced by A.~Po\"els in \cite{Po} include all Sturmian continued fractions mentioned above, and provide further examples of real numbers $\xi$ for which the point $(\xi,\xi^2)$ satisfies the hypotheses of our theorem.  So, our measure \eqref{intro:thm:eq2} applies to these numbers as well.

In general, if a polynomial $f\in\bQ[x,y]$ of degree $2$ admits at least one zero $(\xi,\eta)$ with $1,\xi,\eta$ linearly independent over $\bQ$ then $f$ is irreducible over $\bQ$ and its gradient does not vanish at the point $(\xi,\eta)$.  Thus the equation $f(x,y)=0$ defines a conic in $\bR^2$ with infinitely many points.  In \cite[Theorem 1.2]{Rconic}, we show that there are points $(\xi,\eta)$ on that curve which satisfy the hypotheses of the theorem for any choice of $\lambda$ in $(1/2,1/\gamma)$ (and none for any $\lambda>1/\gamma$). So, if $f\notin\bZ[x]$, then $\xi$ is transcendental and satisfies \eqref{intro:thm:eq2}.

We do not know if the theorem applies to irreducible polynomials $f\in\bQ[x,y]$ of degree $\deg(f)>2$.  We do not even know if  such a polynomial could have a zero $(\xi,\eta)$ which fulfills the hypotheses of the theorem.  In particular, we wonder if there are such ``singular'' points on the plane cubic $y=x^3$ and if so, what is the supremum of the corresponding values of $\lambda$.

\section{The quantitative subspace theorem}
\label{sec:qst}

To state the notion of height used by Evertse in \cite{Ev}, let $\Qbar$ denote the algebraic closure of $\bQ$ in $\bC$, let $K\subset\Qbar$ be a subfield of finite degree $d$ over $\bQ$, and let $n\ge 2$ be an integer.  For each place $v$ of $K$, we denote by $K_v$ the completion of $K$ at $v$, by $d_v=[K_v:\bQ_v]$ the local degree of $K$ at $v$,  and by $|\ |_v$ the absolute value on $K_v$ which extends the usual absolute value on $\bQ$ if $v$ is archimedean or the usual $p$-adic absolute value on $\bQ$ (with $|p|_v=p^{-1}$) if $v$ lies above a prime number $p$.  Then the absolute Weil height of a non-zero point $\ua=(a_1,\dots,a_n)\in K^n$ is
\[
 H(\ua) =
   \prod_{v|\infty}(|a_1|_v^2+\cdots+|a_n|_v^2)^{d_v/(2d)}
   \prod_{v\nmid\infty}\max\{|a_1|_v,\dots,|a_n|_v\}^{d_v/d}
\]
where the first product runs over the archimedean places of $K$ and the second one over all remaining places of $K$.  This height is called absolute because, for a given non-zero $\ua\in\Qbar^n$, it is independent of the choice of a number field $K\subset\Qbar$ such that $\ua\in K^n$.  Moreover it is projective in the sense that $H(\ua)=H(c\ua)$ for any $c\in\Qbar\setminus\{0\}$.

For any non-zero linear form $\ell(\ux)=a_1x_1+\cdots+a_nx_n\in\Qbar x_1+\cdots+\Qbar x_n$, we define the degree of $\ell$ to be the degree of the extension of $\bQ$ generated by all quotients $a_i/a_j$ with $a_j\neq 0$, and its height to be $H(a_1,\dots,a_n)$.  Then \cite[Corollary]{Ev} reads as follows.

\begin{theorem}[Evertse, 1996]
\label{thm:Evertse}
Let $n\ge 2$ be an integer, let $\ell_1,\dots,\ell_n$ be $n$ linearly independent linear forms in $n$ variables with coefficients in $\Qbar$, let $D$ be an upper bound for their degrees, and let $H$ be an upper bound for their heights.  Then, for every $\delta$ with $0<\delta<1$, there are proper linear subspaces $T_1,\dots,T_t$ of $\bQ^n$ with
\[
 t \le 2^{60n^2}\delta^{-7n}\log(4D)\cdot\log\log(4D)
\]
such that every non-zero point $\ux\in\bZ^n$ with $H(\ux)\ge H$ satisfying
\begin{equation}
 \label{eq:thm:Evertse}
 |\ell_1(\ux)\cdots\ell_n(\ux)| \le |\det(\ell_1,\dots,\ell_n)| H(\ux)^{-\delta}
\end{equation}
lies in $T_1\cup\cdots\cup T_t$.
\end{theorem}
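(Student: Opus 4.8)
\emph{Strategy.} This is Evertse's refinement of the Schmidt--Schlickewei quantitative subspace theorem, and its proof is long; here I only sketch the architecture. The plan is to reduce to a normalized system of local inequalities over a number field, partition the space of admissible exponent vectors into finitely many cells, and then, cell by cell, bound the number of exceptional subspaces by combining a gap principle with a Roth-type auxiliary-polynomial argument, using geometry of numbers to set up the relevant flags. Summing over cells will produce the three factors $2^{60n^2}$, $\delta^{-7n}$ and $\log(4D)\log\log(4D)$ that appear in the bound for $t$.

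\emph{Step 1: normalization and reduction to a number field.} First I would rescale $\ell_1,\dots,\ell_n$ so that $\det(\ell_1,\dots,\ell_n)=1$; since each form has degree $\le D$, its coefficients lie in a field of degree $\le D$ over $\bQ$, so all coefficients together lie in a common number field $K\subseteq\Qbar$ with $[K:\bQ]\le D^{n}$ (this can be sharpened, but the crude bound suffices). The rescaling changes $H(\ux)$ and the heights of the forms only by factors bounded in terms of $n$, $D$, $H$, so, after writing the absolute Weil height as a product of local contributions, \eqref{eq:thm:Evertse} becomes a finite system of inequalities $|\ell_i(\ux)|_v\le H(\ux)^{-c_{iv}}$ over the places $v$ of $K$, where the exponents $c_{iv}$ are not fixed but satisfy $\sum_{i,v} d_v c_{iv}/[K:\bQ]\ge\delta$ together with a normalization. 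Partitioning the polytope of admissible $(c_{iv})$ into small cells on which the $c_{iv}$ are essentially constant reduces the problem to a single cell; the number of cells is $\le 2^{60n^2}\delta^{-7n}$ after optimizing the cardinality of the finite set of places one must keep track of, and this is where those two factors originate.

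\emph{Step 2: the Roth machine in one cell.} Fix a cell and list its solutions lying outside the exceptional subspaces built so far by increasing height, $\ux_1,\ux_2,\dots$. A gap principle --- two solutions whose heights are within a bounded power of each other, together with the flags of small forms attached to them, must be linearly dependent --- lets one pass to a subsequence with $H(\ux_{j+1})\ge H(\ux_j)^{1+\varepsilon}$ at the cost of finitely many subspaces. If $m$ consecutive such solutions (with $m=m(n,\delta)$ suitably large) did not all lie in one proper subspace, I would invoke Siegel's lemma to build a nonzero multihomogeneous polynomial $P(\mathbf{X}_1,\dots,\mathbf{X}_m)\in\bZ[\mathbf{X}_1,\dots,\mathbf{X}_m]$ of controlled multidegree $(r_1,\dots,r_m)$ and controlled coefficient size, vanishing to high index at $(\ux_1,\dots,\ux_m)$ relative to those flags. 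The smallness hypotheses $|\ell_i(\ux_j)|_v\le H(\ux_j)^{-c_{iv}}$, summed over all places of $K$ via the product formula, then force the index of $P$ at that point to be larger still --- larger than a quantitative non-vanishing estimate (a many-variable Roth, or Dyson, lemma with explicit constants) allows. The resulting contradiction shows that each cell contributes only boundedly many subspaces from its large solutions.

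\emph{Step 3: the remaining solutions and the obstacle.} Solutions whose height lies below the (enormous, $n,\delta,D$-dependent) threshold used in Step 2 are controlled by geometry of numbers: for $Q$ running over a geometric progression covering the relevant range, the parallelepipeds $\{\ux : |\ell_i(\ux)|_v\le Q^{-c_{iv}}\}$ have exponentially small volume, so by Minkowski's second theorem and a Davenport-type argument their lattice points concentrate on a single proper subspace at each scale, and the number of scales is $\asymp\log$ of a quantity polynomial in $D$, i.e. $\asymp\log(4D)$. The extra $\log\log(4D)$ appears when one optimizes the number of variables $m$ in Step 2 (equivalently the number of levels of the nested construction) against $[K:\bQ]\le D^n$; a less careful choice would leave a power of $D$. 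Adding the contributions of Steps 2 and 3 over all cells of Step 1 gives $t\le 2^{60n^2}\delta^{-7n}\log(4D)\log\log(4D)$. The genuinely hard part is Step 2: producing the non-vanishing (Roth/Dyson) lemma with constants clean enough and with dependence on the number of variables weak enough that the final count is independent of $H$ and that the degree dependence collapses to $\log(4D)\log\log(4D)$ --- precisely the point on which Evertse's version improves upon its predecessors.
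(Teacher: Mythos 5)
The paper does not prove this theorem: it quotes it verbatim from Evertse's paper (the ``Corollary'' of \cite{Ev}) and the only actual argument supplied is the one-paragraph remark immediately following the statement, which removes the primitivity hypothesis present in Evertse's original formulation. That deduction goes as follows: if a non-zero $\ux\in\bZ^n$ satisfies \eqref{eq:thm:Evertse} and $\uy$ is the primitive point with $\ux=m\uy$ for some integer $m\ge 1$, then $H(\uy)=H(\ux)$ because the height is projective, while $|\ell_i(\uy)|\le|\ell_i(\ux)|$ for each $i$, so $\uy$ also satisfies \eqref{eq:thm:Evertse} with $H(\uy)\ge H$; applying Evertse's primitive-point version puts $\uy$ in some $T_k$, and then $\ux=m\uy\in T_k$ as well.

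Your proposal instead undertakes a high-level sketch of Evertse's proof itself. At the level of architecture, your outline is broadly faithful to how the quantitative subspace theorem is actually proved (local normalization, partition of the exponent polytope, gap principle plus Roth/Dyson machine for large solutions, geometry of numbers for moderate ones, and the $\log\log(4D)$ coming from balancing the number of auxiliary variables against $[K:\bQ]$). But it cannot serve as a proof of the statement in this paper for two reasons. First, in a result of this depth the interesting content is entirely in the quantitative details you explicitly defer (the explicit non-vanishing lemma, the precise cell count, the book-keeping that collapses the degree dependence to $\log(4D)\log\log(4D)$); what you have written is a roadmap, not an argument. Second, and more to the point for this paper, you have not addressed the one thing the paper actually needs to justify here, namely the passage from Evertse's primitive-point statement to the version stated for arbitrary non-zero integer vectors; that small, complete deduction is the paper's ``proof'' of the theorem as stated, and it is absent from your proposal.
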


Note that the precise statement of \cite[Corollary]{Ev} deals only with primitive 
points $\ux\in\bZ^n$, namely non-zero integer points whose coordinates are
relatively prime as a set.  However if a non-zero point $\ux\in\bZ^n$ 
satisfies \eqref{eq:thm:Evertse}, then the primitive points $\uy$ of which
it is an integer multiple are also solutions of \eqref{eq:thm:Evertse},
and so that restriction is not necessary.

\section{A sequence of minimal points}
\label{sec:minimal}

Let the notation and the hypotheses be as in the statement of the theorem.  The function $\Delta\colon[1,\infty)\to\bR$ attached to the pair $(\xi,\eta)$ is monotone decreasing to zero, and constant in each interval between two consecutive integers.  Let $X_1=1<X_2<X_3<\cdots$ be its points of discontinuity listed in increasing order, together with $1$.  For each index $i\ge 1$, we set $\Delta_i=\Delta(X_i)$ and choose a non-zero point $\ux_i=(x_{i,0},x_{i,1},x_{i,2})\in\bZ^3$ such that
\[
 x_{i,0}=X_i \et \delta(\ux_i) = \Delta_i.
\]
Following Davenport and Schmidt in \cite{DSa,DSb}, we say that $(\ux_i)_{i\ge 1}$ is a sequence of \emph{minimal points} for $(\xi,\eta)$.  In this section, we establish some of its properties starting with the most fundamental one.

Since $\Delta$ is constant on $[X_i,X_{i+1})$ with $\Delta(X)\le X^{-\lambda}$ for each large enough value of $X$, there exists $i_0\ge 2$ such that
\begin{equation}
 \label{eq:Delta}
 \Delta_i\le X_{i+1}^{-\lambda}
 \quad\text{for each}\quad i\ge i_0.
\end{equation}

\begin{lemma}
\label{lemma:W}
For each $i\ge 1$, the subspace $W_i=\langle\ux_i,\ux_{i+1}\rangle_\bQ$ of\/ $\bQ^3$ spanned by $\ux_i$ and $\ux_{i+1}$ has dimension $2$ and $\{\ux_i,\ux_{i+1}\}$ forms a basis of\/ $W_i\cap\bZ^3$.
\end{lemma}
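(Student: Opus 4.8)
The plan is to show first that $\ux_i$ and $\ux_{i+1}$ are linearly independent over $\bQ$, and then that together they generate the full lattice $W_i\cap\bZ^3$. For the linear independence, suppose to the contrary that $\ux_{i+1}$ is a rational (hence, after clearing denominators, an integer) multiple of $\ux_i$, say $\ux_{i+1}=t\ux_i$ with $t\in\bZ\setminus\{0\}$ (the case $\ux_i$ a multiple of $\ux_{i+1}$ is symmetric and in fact ruled out by $x_{i,0}=X_i<X_{i+1}=x_{i+1,0}$). Comparing first coordinates gives $X_{i+1}=|t|X_i$, so $|t|\ge 2$. But then $\delta(\ux_{i+1})=|t|\,\delta(\ux_i)>\delta(\ux_i)$, whereas $\delta$ is computed on a point of height $X_{i+1}>X_i$: since $\Delta$ is nonincreasing, $\Delta_{i+1}=\Delta(X_{i+1})\le\Delta(X_i)=\Delta_i$, i.e.\ $\delta(\ux_{i+1})\le\delta(\ux_i)$, a contradiction. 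Hence $\dim_\bQ W_i=2$.

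For the primitivity statement, it suffices to check that the $2\times 3$ integer matrix $M$ with rows $\ux_i,\ux_{i+1}$ has the property that all $2\times 2$ minors are coprime; equivalently, that $M$ cannot be completed to a $3\times 3$ integer matrix of determinant $\pm 1$ only if those minors have a common factor, so we must exclude a common prime divisor $p$ of the three minors. The key is the determinant identity: for any $\uy=(y_0,y_1,y_2)\in\bZ^3$ one has the Cramer-type relation expressing $y_0\cdot(\text{minor in coords }1,2)$ and similar quantities; more directly, consider the three $2\times 2$ minors $m_0,m_1,m_2$ obtained by deleting column $0,1,2$ respectively, and use the standard fact that $\{\ux_i,\ux_{i+1}\}$ is a basis of $W_i\cap\bZ^3$ iff $\gcd(m_0,m_1,m_2)=1$. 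Assuming $p\mid m_0,m_1,m_2$, one derives that the "adjugate" vector $\mathbf{w}=(m_0,-m_1,m_2)$ (a normal vector to $W_i$) is divisible by $p$, so $\mathbf{w}/p\in\bZ^3$ is also a normal vector; one then shows this forces one of $\ux_i,\ux_{i+1}$ to be non-primitive, or produces a lattice point in $W_i$ of small height and small $\delta$-value contradicting the minimality.

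The cleanest route, and the one I would push hardest on, is the following: let $\mathbf{w}=\ux_i\wedge\ux_{i+1}\in\bZ^3$ be the cross product, so $\gcd$ of the entries of $\mathbf{w}$ equals the index $[\,W_i\cap\bZ^3 : \bZ\ux_i+\bZ\ux_{i+1}\,]=:q$. We want $q=1$. Estimate $\|\mathbf{w}\|$ from above: two of its coordinates are $2\times 2$ minors involving a column of the form $(x_{i,0},x_{i+1,0})$ against a column $(x_{i,j},x_{i+1,j})$; writing $x_{i,j}=x_{i,0}\xi_j+(\text{error of size }\le\Delta_i)$ with $\xi_1=\xi,\xi_2=\eta$, these minors are $O(X_{i+1}\Delta_i)$, while the remaining minor $m_0=x_{i,1}x_{i+1,2}-x_{i,2}x_{i+1,1}$ is likewise $O(X_{i+1}\Delta_i)$ after substituting and using $|x_{i,j}|\le |x_{i,0}|\,|\xi_j|+\Delta_i = O(X_i)$. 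Hence $\|\mathbf{w}\|\ll X_{i+1}\Delta_i \le X_{i+1}^{\,1-\lambda}$ for $i\ge i_0$ by \eqref{eq:Delta}. On the other hand, if $q\ge 2$ then $\mathbf{w}/q\in\bZ^3$ is a nonzero primitive normal vector to $W_i$ of norm $\ll X_{i+1}^{1-\lambda}/2$, and pairing it against a short integer vector (supplied by Minkowski, or against $\ux_i$ itself) yields an integer point in a lower-dimensional situation with $\delta$-value strictly below $\Delta_i$ at height below $X_{i+1}$ — contradicting the definition of $X_{i+1}$ as the next point of discontinuity.

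The main obstacle is making the last contradiction precise: translating "$\mathbf{w}$ is divisible by $q\ge 2$" into a genuinely new minimal-type point with the right size. The honest shortcut, which I expect the author uses, is instead purely arithmetic: one shows directly that $\gcd(\ux_i)=\gcd(\ux_{i+1})=1$ (a minimal point is automatically primitive, else dividing out improves $\delta$ without increasing $x_0$ much — careful, $x_0$ does decrease, so this needs the discontinuity structure), and then that a common prime factor $p$ of all three minors of $M$ would combine with primitivity of both rows to place a vector $\ux_{i+1}-c\,\ux_i$ (for a suitable $c\in\bZ$, reducing the first coordinate modulo $x_{i,0}$) into $\bZ^3$ with first coordinate in $[1,X_i]$... and then compare with $\Delta_i$; since $\delta(\ux_{i+1}-c\ux_i)\le \delta(\ux_{i+1})+|c|\delta(\ux_i)$ could be large, one must instead choose the right element of $W_i\cap\bZ^3$, namely one from the reduced basis of that rank-$2$ lattice, whose first coordinate is a proper divisor-driven quantity smaller than $X_{i+1}$ while its $\delta$-value stays $\le\Delta_i$ — and that again contradicts the choice of $X_{i+1}$. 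I would organize the final write-up around a reduced basis $\{\mathbf{u},\mathbf{v}\}$ of $W_i\cap\bZ^3$: express $\ux_i,\ux_{i+1}$ in it, and show the index being $>1$ forces $\mathbf{u}$ (the one with smaller first coordinate) to be a legitimate competitor at some discontinuity strictly between $X_i$ and $X_{i+1}$, which is impossible by the definition of consecutive discontinuities.
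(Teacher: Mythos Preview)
Your proof that $\dim W_i=2$ is essentially the paper's: both $\ux_i$ and $\ux_{i+1}$ are primitive (if $g\ge 2$ divided $\ux_i$, then $\ux_i/g$ would give $\Delta(X_i/g)\le\Delta_i/g<\Delta_i$ with $X_i/g<X_i$, contradicting the discontinuity structure), and two primitive integer points with different first coordinates cannot be rational multiples of one another. Your parenthetical ``hence an integer multiple'' is exactly where primitivity is used, so that step is fine once you make it explicit.

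For the basis assertion, the paper simply cites Davenport--Schmidt \cite[Lemma~2]{DSa}. Your final paragraph points in the right direction, but the long detour through $\|\ux_i\wedge\ux_{i+1}\|\ll X_{i+1}^{1-\lambda}$ is both unnecessary and incomplete: the lemma holds for every $i\ge 1$ and has nothing to do with the hypothesis $\lambda>1/2$, and you yourself note that you cannot close the argument from ``$q\ge 2$'' and a norm bound alone. The clean argument---which is what your last paragraph is circling---goes as follows. Suppose $\{\ux_i,\ux_{i+1}\}$ does not generate $W_i\cap\bZ^3$. Pick $\uy\in W_i\cap\bZ^3$ outside $\bZ\ux_i+\bZ\ux_{i+1}$ and write $\uy=r\ux_i+s\ux_{i+1}$ with $r,s\in\bQ$; subtracting an integer combination we may take $|r|,|s|\le 1/2$ with $(r,s)\neq(0,0)$. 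Primitivity of $\ux_i$ forces $s\neq 0$. Then
\[
 |y_0|\le \tfrac12 X_i+\tfrac12 X_{i+1}<X_{i+1},
 \qquad
 \delta(\uy)\le \tfrac12\Delta_i+\tfrac12\Delta_{i+1}<\Delta_i.
\]
Since $\Delta_i\le\Delta_1\le 1/2$, the second inequality rules out $y_0=0$ (else $\uy=0$), so after a sign change $1\le y_0<X_{i+1}$. But $\Delta$ has no discontinuity strictly between $X_i$ and $X_{i+1}$, so $\Delta(y_0)\ge\Delta_i$, contradicting $\Delta(y_0)\le\delta(\uy)<\Delta_i$. This is the Davenport--Schmidt argument; I would drop the cross-product heuristics and write this directly.
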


\begin{proof}
The points $\ux_i$ and $\ux_{i+1}$ are primitive with $\ux_{i+1}\neq\pm\ux_i$.  So they span a subspace of $\bQ^3$ of dimension $2$.  For the second assertion, it suffices to adapt the argument in the proof of \cite[Lemma 2]{DSa}.
\end{proof}

For any basis $\{\ux,\uy\}$ of $W_i\cap\bZ^3$, the cross product $\ux\wedge\uy$ is a primitive element of $\bZ^3$ which, by the lemma, is equal to $\pm\ux_i\wedge\ux_{i+1}$.  Upon defining the height $H(W_i)$ of $W_i$ as the Euclidean norm of that vector, we obtain
\begin{equation}
\label{eq:heightW}
 H(W_i)=\|\ux_i\wedge\ux_{i+1}\|_2 \ll X_{i+1}\Delta_i \ll X_{i+1}^{1-\lambda}
\end{equation}
with implied constants that do not depend on $i$.

\begin{lemma}
\label{lemma:X}
Let $I$ denote the set of indices $i\ge 2$ such that $\ux_{i-1}$, $\ux_i$ and $\ux_{i+1}$ are linearly independent over $\bQ$.  Then $I$ is an infinite set.  For any pair of consecutive elements $i<j$ of $I$ (in the natural ordering inherited from $\bN$), we have $W_i\neq W_j$ and $X_j \le H(W_i)H(W_j)$.
\end{lemma}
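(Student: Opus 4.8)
The plan is to analyze the sequence of minimal points $(\ux_i)$ via the two-dimensional subspaces $W_i = \langle \ux_i, \ux_{i+1}\rangle_\bQ$ and exploit the conic relation $f(\xi,\eta)=0$. First I would show $I$ is infinite by a standard Davenport--Schmidt argument: if only finitely many triples $\ux_{i-1},\ux_i,\ux_{i+1}$ were independent, then from some index on all minimal points would lie in a single fixed plane $W$, forcing $W \cap \bZ^3$ to contain every $\ux_i$ and hence forcing (via \eqref{eq:heightW} and the fact that $H(W_i)$ would be bounded) a non-trivial rational linear relation $a_0 + a_1\xi + a_2\eta = 0$ coming from the cross product $\ux_i \wedge \ux_{i+1}$ being eventually constant; this contradicts the linear independence of $1,\xi,\eta$ over $\bQ$. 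More precisely, if $W_i = W_{i+1}$ for all large $i$ then $\ux_i \wedge \ux_{i+1}$ is a fixed primitive vector $\mathbf{w}=(w_0,w_1,w_2)$, and since $\ux_i \cdot (\xi,\eta \text{ direction})$ is small, $\mathbf{w}$ must be (nearly) orthogonal to $(1,\xi,\eta)$, and in fact $w_0 + w_1\xi + w_2\eta = 0$ exactly because the error terms tend to zero while $\mathbf{w}$ is a fixed integer vector — contradiction. Hence $I$ is infinite, and consecutive elements $i<j$ of $I$ satisfy $W_i \neq W_j$ (this is essentially the definition of $I$: between $i$ and $j$ the subspace changes).

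Next, for the inequality $X_j \le H(W_i) H(W_j)$: the point $\ux_j$ lies in $W_j \cap \bZ^3$, and I claim it also has a controlled relation to $W_i$. The key observation is that when we pass from index $i$ to index $j$ in $I$, the intermediate indices $i+1, \dots, j-1$ are \emph{not} in $I$, which means $\ux_{i}, \ux_{i+1}, \dots, \ux_{j}$ — or at least consecutive overlapping triples among them — span only two-dimensional spaces; chasing this through, all of $\ux_{i+1}, \dots, \ux_{j}$ lie in $W_i$ except that the very last step out of $W_i$ happens at index $j$. Actually the cleanest route: since $i$ and $j$ are consecutive in $I$, for $i \le k < j$ the triple $\ux_{k-1}, \ux_k, \ux_{k+1}$ (for $k$ strictly between, i.e. $i<k<j$, $k\notin I$) is dependent, which propagates $W_{i} = W_{i+1} = \cdots = W_{j-1}$. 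Thus $\ux_j \in W_{j-1} \cap \bZ^3 = W_i \cap \bZ^3$ as well as $\ux_j \in W_j \cap \bZ^3$, so $\ux_j \in W_i \cap W_j$. Since $W_i \neq W_j$ are distinct planes in $\bQ^3$, their intersection is a line $L = W_i \cap W_j$, and $\ux_j$ spans $L \cap \bZ^3$ (being primitive). A standard height inequality for the line of intersection of two planes gives $H(L) \le H(W_i) H(W_j)$ (the primitive direction vector of $L$ divides, up to sign and content, the cross product of the primitive normals of $W_i$ and $W_j$, whose norm is at most $H(W_i)H(W_j)$ by Cauchy--Schwarz/Hadamard). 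Since $\|\ux_j\|_2 \ge x_{j,0} = X_j$, we get $X_j \le \|\ux_j\|_2 = H(L) \le H(W_i) H(W_j)$.

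The main obstacle I anticipate is justifying the propagation $W_i = W_{i+1} = \cdots = W_{j-1}$ rigorously: one must check that "$\ux_{k-1},\ux_k,\ux_{k+1}$ dependent for all $k$ in an interval" genuinely forces all the $W_k$ in that range to coincide, handling the edge cases where two consecutive $\ux$'s could a priori be proportional (ruled out by Lemma~\ref{lemma:W}, since distinct minimal points are not $\pm$ each other) and making sure the chain of equalities does not break. Once that is in hand, the rest is linear algebra over $\bZ$ (primitivity, cross products, the submultiplicativity of heights under intersection) together with the elementary bound $H(W) \ge 1$ and $\|\ux_j\|_2 \ge X_j$. I would also double-check the orientation of the inequality in \eqref{eq:heightW} is consistent, but that is routine. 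In summary: (1) propagation of $W_k$ between consecutive elements of $I$; (2) infinitude of $I$ by contradiction with linear independence of $1,\xi,\eta$; (3) $W_i \neq W_j$ and $\ux_j \in W_i \cap W_j$; (4) the height inequality $H(W_i \cap W_j) \le H(W_i)H(W_j)$ and the bound $\|\ux_j\|_2 \ge X_j$.
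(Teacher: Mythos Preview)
Your proposal is correct and follows essentially the same route as the paper: you establish the propagation $W_i=W_{i+1}=\cdots=W_{j-1}\neq W_j$, observe that $\ux_j$ lies in the one-dimensional intersection $W_i\cap W_j$, and bound $\|\ux_j\|_2$ by the norm of the cross product of the primitive normals of $W_i$ and $W_j$, which is exactly the paper's argument that $(\ux_i\wedge\ux_{i+1})\wedge(\ux_j\wedge\ux_{j+1})$ is a non-zero integer multiple of $\ux_j$. The only extraneous element in your plan is the mention of the conic relation $f(\xi,\eta)=0$, which plays no role in this lemma (it enters only in Lemma~\ref{lemma:f}).
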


\begin{proof}
The argument of \cite[Lemma 5]{DSb} shows that $I$ is infinite.  Let $i<j$ be consecutive elements $i<j$ of $I$.  We have $W_{i-1}\neq W_i=\cdots=W_{j-1}\neq W_j$, thus $W_i\neq W_j$.  Moreover the points $\ux_i\wedge\ux_{i+1} = \pm \ux_{j-1}\wedge\ux_j$ and $\ux_j\wedge\ux_{j+1}$ being orthogonal to $\ux_j$ and not parallel, their cross product is a non-zero integer multiple of $\ux_j$, and thus
\[
 X_j\le \|\ux_j\|_2\le \|\ux_i\wedge\ux_{i+1}\|_2\,\|\ux_j\wedge\ux_{j+1}\|_2 = H(W_i)H(W_j).
\]
\end{proof}

\begin{lemma}
\label{lemma:f}
For each $i\ge 1$, we have $X_{i+1}^\lambda\ll X_i$.
\end{lemma}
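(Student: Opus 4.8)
The plan is to exploit the quadratic relation $f(\xi,\eta)=0$ together with the defining property of minimal points. Write $f(x,y) = ax^2 + bxy + cy^2 + dx + ey + g$ with coefficients in $\bZ$ (after clearing denominators), noting that since $f\notin\bQ[x]$ at least one of $b,c,e$ is non-zero. For a minimal point $\ux_i=(x_{i,0},x_{i,1},x_{i,2})$ we have $|x_{i,0}\xi - x_{i,1}|\le \Delta_i$ and $|x_{i,0}\eta - x_{i,2}|\le\Delta_i$; since $x_{i,0}=X_i$ and $\Delta_i\le X_{i+1}^{-\lambda}$ is small, both $x_{i,1}/x_{i,0}$ and $x_{i,2}/x_{i,0}$ are within $O(\Delta_i)$ of $\xi$ and $\eta$ respectively. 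Substituting, the integer
\[
 F_i := a x_{i,1}^2 + b x_{i,1}x_{i,2} + c x_{i,2}^2 + d x_{i,0}x_{i,1} + e x_{i,0}x_{i,2} + g x_{i,0}^2
\]
equals $x_{i,0}^2 f(\xi,\eta)$ plus error terms, each of which is a product of one factor of size $O(X_i)$ and one factor of the form $x_{i,0}\xi-x_{i,1}$ or $x_{i,0}\eta-x_{i,2}$ of size $O(\Delta_i)$, hence $|F_i| \ll X_i\Delta_i \ll X_i^{1-\lambda}$, which (for $i$ large, using $\lambda>1/2$) is much smaller than... actually this only gives $|F_i|\ll X_i^{1-\lambda}$, which need not be $<1$. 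The key point is instead that $F_i$ is an integer, so either $F_i=0$ or $|F_i|\ge 1$; I claim the latter forces a contradiction with $X_{i+1}^\lambda$ large once we do the estimate more carefully — but the cleaner route, and the one I would follow, is:

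First I would show that $F_i=0$ cannot happen for all large $i$: if $F_i=0$ then $\ux_i$ lies on the affine cone over the conic, and combined with linear independence of $1,\xi,\eta$ one can rule this out for large $i$ (the rational point $(x_{i,1}/x_{i,0}, x_{i,2}/x_{i,0})$ would be a rational point on an irreducible conic converging to the transcendental point $(\xi,\eta)$, which is fine — so this is not immediate). The honest approach: the quantity we really want to bound is not $F_i$ for a single point but the value of the \emph{quadratic form} $f$ evaluated at the cross product or, better, we compare $\ux_i$ and $\ux_{i+1}$. Note $W_i = \langle \ux_i,\ux_{i+1}\rangle$ and $H(W_i)\ll X_{i+1}^{1-\lambda}$ by \eqref{eq:heightW}. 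The conic, viewed projectively, is a curve of degree $2$ in $\bP^2$; the line $W_i$ (a point of the dual $\bP^2$) meets it in at most $2$ points, and both $\ux_i,\ux_{i+1}$ are projectively close to the point $[\,1:\xi:\eta\,]$ of the conic. I would make this quantitative: the two lines through $[1:\xi:\eta]$ and a nearby integer point are constrained, and Davenport–Schmidt's original argument for $y=x^2$ (their Lemma giving $X_{i+1}\ll X_i^{?}$) generalizes. Concretely, $F_i$ and $F_{i+1}$ are both $O(X_i\Delta_i)$ and $O(X_{i+1}\Delta_i)$ respectively (same $\Delta_i$ since $\Delta$ is constant on $[X_i,X_{i+1})$ — wait, $\ux_{i+1}$ has first coordinate $X_{i+1}$ and $\delta(\ux_{i+1})=\Delta_{i+1}$, so rather $F_{i+1}\ll X_{i+1}\Delta_{i+1}$). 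Hmm. The robust statement is: evaluating $f$ at $\ux_i$ gives $|F_i|\ll X_i\Delta_i$; evaluating the associated bilinear form $B_f(\ux_i,\ux_{i+1})$ gives $|B_f(\ux_i,\ux_{i+1})|\ll X_{i+1}\Delta_i + X_i\Delta_{i+1}\ll X_{i+1}\Delta_i$. These are integers. If all three of $F_i$, $F_{i+1}$, $B_f(\ux_i,\ux_{i+1})$ vanished, the plane $W_i$ would be totally isotropic for $f$, impossible for a non-degenerate (or rank $\ge 2$) quadratic form in $3$ variables unless... a rank-$2$ form can have an isotropic plane? No: a totally isotropic subspace for a form of rank $r$ on an $n$-space has dimension $\le n - \lceil r/2\rceil$; rank $\ge 2$ gives dimension $\le 2$, so a $2$-dimensional totally isotropic space is possible only if $f$ has rank exactly... let me not chase this. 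The cleanest: since $f$ is irreducible over $\bQ$, the conic has no $\bQ$-line inside it and in particular $W_i$ is not totally isotropic, so at least one of the three integers $F_i, F_{i+1}, B_f(\ux_i,\ux_{i+1})$ is non-zero, hence has absolute value $\ge 1$.

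Therefore $1 \le \max\{|F_i|, |F_{i+1}|, |B_f(\ux_i,\ux_{i+1})|\} \ll X_{i+1}\Delta_i$, and since $\Delta_i\le X_{i+1}^{-\lambda}$ we get $1\ll X_{i+1}^{1-\lambda}$, which... only says $X_{i+1}$ is bounded below, useless. The real mechanism must use the \emph{smaller} of the coordinate differences. I think the intended argument is: consider the $2\times 2$ minors. We have $\ux_i\wedge\ux_{i+1}=\pm H$-vector with norm $H(W_i)\ll X_{i+1}\Delta_i$. The coordinates of this cross product are, e.g., $x_{i,0}x_{i+1,1}-x_{i,1}x_{i+1,0}$, and rewriting $= x_{i,0}(x_{i+1,1}-x_{i+1,0}\xi) - x_{i+1,0}(x_{i,1}-x_{i,0}\xi)$, showing it is $\ll X_i\Delta_{i+1}+X_{i+1}\Delta_i\ll X_{i+1}\Delta_i$. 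Now plug the nonzero integer vector $\ux_i\wedge\ux_{i+1}$ into a suitable form: because $(\xi,\eta)$ satisfies the conic, there is a \emph{linear} form $L$ with algebraic coefficients (tangent direction) such that $L(1,\xi,\eta)=0$ but evaluated at integer approximations it is not too small... This is getting long. The main obstacle, and what I would spend the proof on, is exactly this: producing from the quadratic constraint $f(\xi,\eta)=0$ a lower bound for $X_i$ in terms of $X_{i+1}^\lambda$. I would do it by the Davenport–Schmidt device: the vector $\ux_i\wedge\ux_{i+1}$ is a nonzero integer point, its ``$\delta$-value'' with respect to $(\xi,\eta)$ is $\ll X_{i+1}\Delta_i$, but by minimality any integer point with first coordinate $\le X$ has $\delta$-value $\ge\Delta(X)$, and — crucially — the quadratic relation lets one bound the first coordinate of $\ux_i\wedge\ux_{i+1}$ from below by a power of $X_i$ (this is where $f\notin\bQ[x]$, i.e. the conic genuinely involves $y$, is used, exactly as $y=x^2$ is used in \cite{DSb}). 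Combining, $X_{i+1}\Delta_i \gg \Delta(c X_i^{?})\gg X_i^{-?\lambda}$, and rearranging with $\Delta_i\le X_{i+1}^{-\lambda}$ yields $X_{i+1}^\lambda\ll X_i$. I expect the bookkeeping of which minor to use and the appeal to the minimality of $\Delta$ at the appropriate scale to be the only delicate points; the rest is routine.
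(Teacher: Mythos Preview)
Your proposal does not reach a proof: you correctly set up the integer
$F_i=\varphi(\ux_i)$ (where $\varphi$ is the homogenization of $f$) and the
estimate $|F_i|\ll X_i\Delta_i$, and you observe that $F_i\neq 0$ would give
$1\le |F_i|\ll X_i X_{i+1}^{-\lambda}$, which is exactly the lemma.  But you then
abandon this route because you cannot see why $F_i\neq 0$ for large $i$, and none
of your substitutes works.  In particular, the bilinear-form idea yields only
$1\ll X_{i+1}\Delta_i\ll X_{i+1}^{1-\lambda}$, which is vacuous; and the
cross-product sketch at the end is left unfinished (you never specify the form to
evaluate on $\ux_i\wedge\ux_{i+1}$ nor how minimality re-enters).

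The missing idea is precisely the proof that $\varphi(\ux_i)\neq 0$ for all large
$i$, and this is where the paper spends its effort.  If the conic has at most one
rational point, it is immediate.  Otherwise the paper uses a linear change of
coordinates $T\in\GL_3(\bQ)$ sending $\varphi$ (up to scalar) to $x_0x_2-x_1^2$ and
$(1,\xi,\eta)$ to a multiple of $(1,\theta,\theta^2)$.  If $\varphi(\ux_i)=0$, the
transformed point $\uy_i$ is $\pm(m^2,mn,n^2)$ for coprime $m,n$, with
$|m|\asymp X_i^{1/2}$ and $|m\theta-n|\ll X_{i+1}^{-\lambda}X_i^{-1/2}$.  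Now compare
with the previous minimal point: since $\uy_{i-1}$ is not proportional to $\uy_i$,
some $m y_{i-1,j+1}-n y_{i-1,j}$ is a nonzero integer, yet it is
$\ll X_i^{1/2-\lambda}$, forcing $i$ to be bounded.  This Davenport--Schmidt device
(using $\ux_{i-1}$, not $\ux_{i+1}$, and the parametrization of rational points on
the conic) is exactly the step you declared ``not immediate'' and moved past.
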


\begin{proof}
Let $\varphi\in\bQ[x_0,x_1,x_2]$ be the homogeneous quadratic form for which $f(x,y)=\varphi(1,x,y)$.  We claim that $\varphi(\ux_i)\neq 0$ for each sufficiently $i$.  If we take it for granted then, for each of those $i$, we have $1/c\le |\varphi(\ux_i)|$ where $c$ is a common denominator of the coefficients of $\varphi$.  As $\varphi(1,\xi,\eta)=0$, we also have $|\varphi(\ux_i)|\ll \|\ux_i\|_2 \Delta_i \ll X_iX_{i+1}^{-\lambda}$.  Combining the two estimates yields $X_{i+1}^\lambda\ll X_i$.

The claim is clear if $f$ has at most one zero in $\bQ^2$.  Otherwise, \cite[Lemma 2.4]{Rconic} shows that there exist $\mu\in\bQ^\times$ and $T\in\GL_3(\bQ)$ such that $\mu(\varphi\circ T)(x_0,x_1,x_2)=x_0x_2-x_1^2$.  Then $T^{-1}(1,\xi,\eta)$ is proportional to $\Theta=(1,\theta,\theta^2)$ for some $\theta\in\bR$ and, for each $i\ge 1$, the point $T^{-1}(\ux_i)$ is proportional to a primitive integral point $\uy_i=(y_{i,0},y_{i,1},y_{i,2})$ with $\|\uy_i\|_2\asymp X_i$ and $\|\uy_i\wedge\Theta\|_2\asymp \Delta_i$.  We now argue as Davenport and Schmidt in the proof of \cite[Lemma 2]{DSb}, omitting details. If $\varphi(\ux_i)=0$ for some $i$, then $\uy_i=\pm(m^2,mn,n^2)$ for some coprime integers $m,n$ with $|m|\asymp X_i^{1/2}$ and $|m\theta-n|\ll X_{i+1}^{-\lambda}X_i^{-1/2}$.  However, if $i\ge 2$, then $\uy_{i-1}$ is not proportional to $\uy_i$ and so we have $my_{i-1,j+1}\neq ny_{i-1,j}$ for some $j\in\{0,1\}$.  For that $j$, we find that $1\le |my_{i-1,j+1}-ny_{i-1,j}|\ll X_i^{1/2-\lambda}$ and so $i$ is bounded from above.
\end{proof}

\begin{lemma}
\label{lemma:main}
Let $\theta>(1-\lambda)/(2\lambda-1)$. Then, there exists an element $i_1$ of $I$ with $i_1\ge i_0$ such that, for any pair of consecutive elements $i<j$ of $I$ with $i\ge i_1$, we have $H(W_i)<H(W_j)$ and $X_{j+1}<X_{i+1}^\theta$.
\end{lemma}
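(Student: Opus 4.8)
The plan is to extract everything from the basic estimates on heights of the planes $W_i$ together with Lemma~\ref{lemma:X} and Lemma~\ref{lemma:f}, and to run a doubling/telescoping argument along the indices of $I$. First I would record the two key inequalities already at hand: from \eqref{eq:heightW} we have $H(W_i)\ll X_{i+1}^{1-\lambda}$ for every $i$, and from Lemma~\ref{lemma:X}, for consecutive $i<j$ in $I$ we have $X_j\le H(W_i)H(W_j)$. I also want a lower bound for $H(W_j)$ in terms of $X_j$: since $\ux_j$ and $\ux_{j+1}$ are linearly independent and $\ux_j$ has first coordinate $X_j$ while $\delta(\ux_{j+1})=\Delta_{j+1}$ is small, a direct computation of the cross product gives $H(W_j)=\|\ux_j\wedge\ux_{j+1}\|_2\gg X_j\Delta_{j+1}$, and more to the point $H(W_j)\gg X_{j+1}\Delta_j$; combined with \eqref{eq:Delta} and Lemma~\ref{lemma:f} this should yield $H(W_j)\gg X_{j+1}^{1-\lambda}$ up to a constant, i.e. $H(W_i)\asymp X_{i+1}^{1-\lambda}$ for $i\ge i_0$. (Here the hypothesis $i\ge i_0$ lets me use $\Delta_i\le X_{i+1}^{-\lambda}$, and Lemma~\ref{lemma:f} gives the matching $X_{i+1}^\lambda\ll X_i$, so the two-sided estimate $H(W_i)\asymp X_{i+1}^{1-\lambda}$ holds for all large~$i$ in~$I$.)

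Granting $H(W_i)\asymp X_{i+1}^{1-\lambda}$, I would plug this into $X_j\le H(W_i)H(W_j)$ for consecutive $i<j$ in $I$ to obtain $X_j\ll X_{i+1}^{1-\lambda}X_{j+1}^{1-\lambda}$. Now I use two further facts to relate $X_j$ and $X_{j+1}$: trivially $X_j\le X_{j+1}$, and by Lemma~\ref{lemma:f} applied at index~$j$, $X_{j+1}^\lambda\ll X_j$, hence $X_{j+1}\ll X_j^{1/\lambda}$. The first of these gives $X_j\ll X_{i+1}^{1-\lambda}X_{j+1}^{1-\lambda}$, and since $X_{j+1}$ is the first index strictly after $X_j$ (consecutive elements of $I$ need not be consecutive points of discontinuity, but $X_{j}\le X_{j+1}$ certainly), I can afford to be crude: using $X_{j+1}\ge X_j$ one way and $X_{j+1}\ll X_j^{1/\lambda}$ the other. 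Substituting $X_{j+1}\ll X_j^{1/\lambda}$ into $X_j\ll X_{i+1}^{1-\lambda}X_{j+1}^{1-\lambda}$ yields $X_j\ll X_{i+1}^{1-\lambda}X_j^{(1-\lambda)/\lambda}$, that is $X_j^{1-(1-\lambda)/\lambda}\ll X_{i+1}^{1-\lambda}$, i.e. $X_j^{(2\lambda-1)/\lambda}\ll X_{i+1}^{1-\lambda}$, so $X_j\ll X_{i+1}^{\lambda(1-\lambda)/(2\lambda-1)}$. Finally $X_{j+1}\ll X_j^{1/\lambda}\ll X_{i+1}^{(1-\lambda)/(2\lambda-1)}$. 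Since $\theta>(1-\lambda)/(2\lambda-1)$ and $X_{i+1}\to\infty$, the implied constant is absorbed for $i$ large enough in~$I$, giving $X_{j+1}<X_{i+1}^\theta$; this defines $i_1$.

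For the remaining assertion $H(W_i)<H(W_j)$: since $i<j$ are consecutive in $I$, the planes $W_i\ne W_j$ by Lemma~\ref{lemma:X}, and with $H(W_i)\asymp X_{i+1}^{1-\lambda}$, $H(W_j)\asymp X_{j+1}^{1-\lambda}$, it suffices to know $X_{i+1}<X_{j+1}$ with enough room. We have $W_i=W_{i+1}=\dots=W_{j-1}$, so $\ux_{i+1},\dots,\ux_j$ all lie in $W_i$ while $\ux_{j+1}\notin W_i$; in particular $X_{j+1}$ is a genuine discontinuity point beyond $X_{i+1}$, so $X_{i+1}<X_{j+1}$, but I need a quantitative gap to beat the constant hidden in $\asymp$. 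That gap comes for free from the inequality $X_j\le H(W_i)H(W_j)\ll X_{i+1}^{1-\lambda}X_{j+1}^{1-\lambda}$ together with $X_{j+1}^\lambda\ll X_j$: these force $X_{j+1}^\lambda\ll X_{i+1}^{1-\lambda}X_{j+1}^{1-\lambda}$, hence $X_{j+1}^{2\lambda-1}\ll X_{i+1}^{1-\lambda}$, so $X_{j+1}\ll X_{i+1}^{(1-\lambda)/(2\lambda-1)}$ — wait, that is an upper bound, not a lower one. For the lower direction I instead combine $X_j\le H(W_i)H(W_j)$ with the lower bound $H(W_j)\gg X_{j+1}^{1-\lambda}$ and the upper bound $H(W_i)\ll X_{i+1}^{1-\lambda}$ is the wrong way; the clean route is to observe $X_{i+1}\le X_j$ (as $i+1\le j$) and $H(W_j)\gg X_{j+1}^{1-\lambda}$, while $H(W_i)\ll X_{i+1}^{1-\lambda}\le X_j^{1-\lambda}$; then from $X_j\le H(W_i)H(W_j)$ we can't immediately separate them, so instead I would argue directly: $H(W_j)/H(W_i)\gg X_{j+1}^{1-\lambda}/X_{i+1}^{1-\lambda}$, and since $\lambda<1$ this ratio exceeds $1$ once $X_{j+1}$ is sufficiently larger than $X_{i+1}$, which is guaranteed because $X_{j+1}\ge X_j\ge X_{i+1}\cdot(X_j/X_{i+1})$ and $X_j/X_{i+1}\to\infty$ along $I$ (this last uses that $X_{i+1}^\lambda\ll X_i$ fails to bound $X_j/X_{i+1}$ from above unless $i=j$, so for consecutive distinct $i<j$ in $I$ with both large, $X_j/X_{i+1}$ is large). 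I then enlarge $i_1$ if necessary so both conclusions hold simultaneously.

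The main obstacle I anticipate is pinning down the constants uniformly so that the strict inequalities $H(W_i)<H(W_j)$ and $X_{j+1}<X_{i+1}^\theta$ genuinely hold for \emph{all} consecutive pairs past $i_1$, rather than merely asymptotically: every estimate above is an $\ll$ with an absolute implied constant, and the passage from $X_{j+1}\ll X_{i+1}^{(1-\lambda)/(2\lambda-1)}$ to $X_{j+1}<X_{i+1}^\theta$ requires choosing $i_1$ large in terms of $\theta-(1-\lambda)/(2\lambda-1)>0$ and the accumulated constant. The rest is bookkeeping: one must be careful that Lemmas~\ref{lemma:X}, \ref{lemma:f} are being applied at the correct indices ($i$, $j$, and $j$ again), and that the two-sided estimate $H(W_i)\asymp X_{i+1}^{1-\lambda}$ is only claimed for $i\in I$ with $i\ge i_0$, which is harmless since $i_1\ge i_0$ by construction.
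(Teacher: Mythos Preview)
Your derivation of $X_{j+1}<X_{i+1}^\theta$ is fine and matches the paper's: from $X_{j+1}^\lambda\ll X_j\le H(W_i)H(W_j)\ll X_{i+1}^{1-\lambda}X_{j+1}^{1-\lambda}$ one gets $X_{j+1}^{2\lambda-1}\ll X_{i+1}^{1-\lambda}$ directly, without the detour through $X_j$, but your version reaches the same inequality.

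The argument for $H(W_i)<H(W_j)$, however, has a genuine gap. You rely on a two-sided estimate $H(W_i)\asymp X_{i+1}^{1-\lambda}$, but only the upper bound $H(W_i)\ll X_{i+1}^{1-\lambda}$ is available: equation \eqref{eq:Delta} says $\Delta_i\le X_{i+1}^{-\lambda}$, not $\Delta_i\asymp X_{i+1}^{-\lambda}$, so there is no reason for $H(W_i)\gg X_{i+1}^{1-\lambda}$ to hold. Even granting the two-sided estimate, your fallback claim that $X_j/X_{i+1}\to\infty$ for consecutive $i<j$ in $I$ is false: nothing prevents $j=i+1$ (both $i$ and $i+1$ can lie in $I$), in which case $X_j/X_{i+1}=1$, and your ratio $H(W_j)/H(W_i)\gg (X_{j+1}/X_{i+1})^{1-\lambda}$ gives no information.

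The paper avoids all of this with a one-line contradiction using only upper bounds: if $H(W_j)\le H(W_i)$, then
\[
 X_{i+1}\le X_j\le H(W_i)H(W_j)\le H(W_i)^2\ll X_{i+1}^{2(1-\lambda)},
\]
which is impossible for large $i$ since $2(1-\lambda)<1$. No lower bound on $H(W_i)$ is needed.
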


\begin{proof}
Let $i<j$ be consecutive elements of $I$ with $i\ge i_0$.  If $H(W_j)\le H(W_i)$, then Lemma \ref{lemma:X} together with \eqref{eq:heightW} yields
\[
 X_{i+1}\le X_j\le H(W_i)H(W_j)\le H(W_i)^2\ll X_{i+1}^{2(1-\lambda)}.
\]
Since $2(1-\lambda)<1$, this cannot hold when $i$ is large enough.  For such $i$, we thus have $H(W_i)<H(W_j)$.  Combining Lemmas \ref{lemma:X} and \ref{lemma:f} with \eqref{eq:heightW}, we also find
\[
 X_{j+1}^\lambda \ll X_j\le H(W_i)H(W_j)\ll X_{i+1}^{1-\lambda}X_{j+1}^{1-\lambda},
\]
thus $X_{j+1}^{2\lambda-1}\ll X_{i+1}^{1-\lambda}$ and so $X_{j+1}<X_{i+1}^\theta$ if $i$ is large enough.
\end{proof}

\section{Proof of the main theorem}
\label{sec:proof}

In continuation with the preceding section, we suppose that 
$\xi$, $\eta$, $f(x,y)$ and $\lambda$ are as in the statement 
of the theorem.  We choose $\theta$ and $i_1$ as in Lemma \ref{lemma:X},
and list in increasing order $i_1<i_2<i_3<\cdots$ the 
elements of $I$ that follow $i_1$.  Then, $W_{i_1},W_{i_2},W_{i_3},\dots$
are subspaces of $\bQ^3$ of dimension $2$ with strictly increasing heights
and so they are pairwise distinct.  This will be important in what 
follows.  We also choose $\delta\in (0,1/2)$ such that 
\begin{equation}
 \label{eq:choix:delta}
 6\delta<2\lambda-1.
\end{equation}
All constants $C_1,C_2,\dots$ that appear below depend only,
in a simple way, on these data.  

Since $f(x,y)$ has degree $2$ and $\partial f/\partial y \neq 0$, 
we deduce from the linear independence of $1,\xi,\eta$ over $\bQ$
that $|\partial f/\partial y(\xi,\eta)| \neq 0$.
Thus, by the implicit function theorem, there exists $C_1>0$ and $C_2\ge 1$ such 
that, for any $\alpha\in\bC$ with $|\xi-\alpha|\le C_1$, we can find 
$\beta\in\bC$ satisfying 
\begin{equation}
 \label{eq:beta:analytic}
 f(\alpha,\beta)=0
 \et
 |\eta-\beta|\le C_2|\xi-\alpha|.   
\end{equation}

Fix integers $d\ge 3$ and $H\ge 2$ and an algebraic number $\alpha$ 
with degree $d(\alpha)\le d$ and naive height $H_0(\alpha)\le H$.  We need 
to provide a lower bound for 
\begin{equation}
 \label{eq:epsilon}
 \epsilon:=|\xi-\alpha|.
\end{equation}
Suppose first that $\epsilon\le C_1$ and choose $\beta\in\bC$ as in
\eqref{eq:beta:analytic}.  Since $f(x,y)$ is irreducible over $\bQ$ and depends on $y$,
it is relatively prime to the irreducible polynomial $P_\alpha(x)$
of $\alpha$ and so $\beta$ is a root of their resultant in $x$.
Thus $\beta$ is an algebraic number with
\begin{equation}
 \label{eq:beta:algebraic}
 d(\beta)\le 2d
 \et
 H_0(\beta)\le C_3^d H^2.
\end{equation}
 
Consider the linear forms with algebraic coefficients
\begin{equation}
 \label{eq:linform}
 \ell_1=x_0, \quad \ell_2=x_0\alpha-x_1 \et \ell_3=x_0\beta-x_2.
\end{equation}
By the above they have degree at most $2d$.  To estimate their 
heights, we note that, for any algebraic number $\gamma$, we have
\[
 H(1,\gamma)
  \le \sqrt{2}M(\gamma)^{1/d(\gamma)}
  \le C_4 H_0(\gamma)^{1/d(\gamma)}
  \le C_4 H_0(\gamma),
\]
where $M(\gamma)$ denotes the Mahler measure of $\gamma$.  Using
this crude estimate together with \eqref{eq:beta:algebraic}, 
we find that the linear forms \eqref{eq:linform} have heights 
at most
\[
 \max\{H(1,\alpha),H(1,\beta)\} 
  \le C_4\max\{H,C_3^d H^2\}
  \le H^{C_5 d},
\]
where the last inequality uses $H\ge 2$.  By Theorem \ref{thm:Evertse}
of Evertse,
there exist proper linear subspaces $T_1,\dots,T_t$ of $\bQ^3$ with
\begin{equation}
 \label{eq:t}
 t \le 2^{540}\delta^{-21}\log(8d)\cdot\log\log(8d)
   \le C_6(\log d)(\log\log d)
\end{equation}
such that every non-zero point $\ux=(x_0,x_1,x_2)\in\bZ^3$ with 
$H(\ux)\ge H^{C_5 d}$ satisfying
\[
 |x_0|\,|x_0\alpha-x_1|\,|x_0\beta-x_2| \le H(\ux)^{-\delta}
\]
lies in $T_1\cup\cdots\cup T_t$.

Let $\ell\ge 1$ be the smallest integer such that 
\begin{equation}
 \label{eq:ell}
 X_{i_\ell+1} \ge \max\{\, H^{C_5 d},\, (t+1)^{1/\delta},\, 4^{1/\delta} C_7\}
 \quad
 \text{where}
 \quad C_7=3+|\xi|+|\eta|.
\end{equation}
The subspaces $W_{i_\ell},\dots,W_{i_{\ell+t}}$ of $\bQ^3$
being all distinct, there is at least one index 
$j$ among $\{i_\ell,\dots,i_{\ell+t}\}$ for which 
$W_j\notin\{T_1,\dots,T_t\}$.  Fix such a choice of $j$.
Since $W_j$ has dimension $2$,
it is not contained in any $T_i$. Now, consider 
the points $\ux=a\/\ux_j+\ux_{j+1}$ with $a\in\bZ$.  
Since any two of them span $W_j$, each $T_i$ contains at most
of one these points.  Thus there is at least one choice of 
$a$ with $0\le a\le t$ for which $\ux\notin T_1\cup\cdots\cup T_t$.
Fix such a choice of $a$ and denote by $(x_0,x_1,x_2)$ the 
coordinates of the corresponding point $\ux=a\/\ux_j+\ux_{j+1}$.
Since $\{\ux_j,\ux_{j+1}\}$ is a basis of $W_j\cap\bZ^3$ 
(see Lemma \ref{lemma:W}), this point $\ux$ is primitive
and so $H(\ux)=\|\ux\|_2$ is its Euclidean norm.
This yields $H(\ux)\ge x_0 \ge X_{j+1}\ge X_{i_\ell+1}\ge H^{C_5 d}$ 
and thus, by the result of Evertse, we must have
\begin{equation}
 \label{eq:Evertse}
 |x_0|\,|x_0\alpha-x_1|\,|x_0\beta-x_2| > \|\ux\|_2^{-\delta}.
\end{equation}
Using \eqref{eq:Delta}, we find
\begin{align*}
 &|x_0|=x_0\le X_j+tX_{j+1}\le (t+1)X_{j+1}\\
 &\max\{|x_0\xi-x_1|,|x_0\eta-x_2|\}
 \le \Delta_j+t\Delta_{j+1}
 \le (t+1)\Delta_j
 \le (t+1)X_{j+1}^{-\lambda}.
\end{align*}
By \eqref{eq:ell}, we also have
$t+1 \le X_{i_\ell+1}^\delta\le X_{j+1}^\delta$, thus
these inequalities imply
\[
 |x_0|\le X_{j+1}^{1+\delta}
 \et
 \max\{|x_0\xi-x_1|,|x_0\eta-x_2|\}
 \le X_{j+1}^{-\lambda+\delta}.
\]
Since $\delta<1/2<\lambda$, this yields 
\[
 \|\ux\|_2
   \le |x_0|+|x_1|+|x_2|
   \le C_7 |x_0|
   \le C_7 X_{j+1}^2,
\]
where $C_7$ is as in \eqref{eq:ell}.
Using \eqref{eq:beta:analytic} and \eqref{eq:epsilon},
we also deduce that
\begin{align*}
 \max\{|x_0\alpha-x_1|,|x_0\beta-x_2|\}
 &\le |x_0|\max\{|\xi-\alpha|,|\eta-\beta|\} + X_{j+1}^{-\lambda+\delta}\\
 &\le C_2X_{j+1}^{1+\delta}\epsilon + X_{j+1}^{-\lambda+\delta}\\
 &\le 2\max\big\{C_2X_{j+1}^{1+\delta}\epsilon,\ X_{j+1}^{-\lambda+\delta}\big\}.
\end{align*}
Substituting these estimates into \eqref{eq:Evertse}, we obtain
\begin{equation}
 \label{eq:maineq}
 4 X_{j+1}^{1+\delta}
   \max\big\{C_2X_{j+1}^{1+\delta}\epsilon,\ 
             X_{j+1}^{-\lambda+\delta}\big\}^2
 \ge C_7^{-\delta} X_{j+1}^{-2\delta}.
\end{equation}
Suppose first that $C_2X_{j+1}^{1+\delta}\epsilon
< X_{j+1}^{-\lambda+\delta}$.  Then, after simplifications, 
we obtain, by virtue of the choice of $\delta$ in \eqref{eq:choix:delta},
\[
 4C_7^\delta 
   \ge X_{j+1}^{2\lambda-1-5\delta} 
   > X_{j+1}^\delta \ge X_{i_\ell+1}^\delta,
\]
in contradiction with \eqref{eq:ell}.  So, the inequality 
\eqref{eq:maineq} implies that
\begin{equation}
 \label{eq:est:epsilon}
 \epsilon 
   \ge 2^{-1}C_2^{-1}C_7^{-\delta/2}X_{j+1}^{-(3+5\delta)/2}
   \ge X_{j+1}^{-C_8}.
\end{equation}
We now use Lemma \ref{lemma:main} to estimate $X_{j+1}$ from 
above.  Since $j\in\{i_\ell,\dots,i_{\ell+t}\}$, we obtain
\[
 X_{j+1}\le X_{i_{\ell+t}+1} \le X_{i_\ell+1}^{\theta^t}.
\]
If $\ell=1$, then \eqref{eq:est:epsilon} yields
\[
 \epsilon
   \ge X_{i_1+1}^{-C_8\theta^t}
   \ge 2^{-C_9\theta^t}
   \ge H^{-C_9\theta^t}.
\]
Otherwise, by the choice of $\ell$ in \eqref{eq:ell}, we have
\[
 X_{i_{\ell-1}+1} 
  < \max\{\, H^{C_5 d},\, (t+1)^{1/\delta},\, 4^{1/\delta} C_7\}
  \le H^{C_{10} d}.  
\]
Since $X_{i_\ell+1}\le X_{i_{\ell-1}+1}^\theta$, a similar 
computation then gives
\[
  \epsilon
   \ge X_{i_{\ell-1}+1}^{-C_8\theta^{t+1}}
   \ge H^{-C_8C_{10} d\theta^{t+1}}.
\]
So, in both cases, we obtain $|\xi-\alpha|=\epsilon \ge H^{-w(d)}$
where
\[
 w(d)=C_{11}d\,\theta^t \le \exp\big(C_{12}(\log d)(\log\log d)\big),
\]
using the upper bound for $t$ in \eqref{eq:t}.  Finally, in the 
case where $\epsilon\ge C_1$, this remains true at the expense
of replacing $C_{12}$ by a larger constant if necessary.

%
%

\end{document}